\newcommand{\floor}[1]{\ensuremath{\left\lfloor #1\right\rfloor} }
\newcommand{\size}[1]{\left \vert #1 \right \vert}
\newtheorem{lemma}{Lemma}[section]
\newtheorem{theorem}[lemma]{Theorem}
\newtheorem{corollary}[lemma]{Corollary}
\theoremstyle{definition}
\newtheorem{definition}[lemma]{Definition}
\def\sat{\operatorname{sat}}
\def\ex{\operatorname{ex}}
\def\Msat{\sat_g}    
\def\msat{\sat'_g} 
\def\cC{{\mathcal C}}
\def\cF{{\mathcal F}}
\def\cO{{\mathcal O}}
\def\cP{{\mathcal P}}
\def\cT{{\mathcal T}}
\def\cX{{\mathcal X}}
\def\Gb{{\overline G}}
\def\FR#1#2{\frac{#1}{#2}}
\def\CH#1#2{\binom{#1}{#2}}
\def\NN{\mathbb{N}}
\def\FL#1{\left\lfloor{#1}\right\rfloor}
\def\CL#1{\left\lceil{#1}\right\rceil}
\def\esub{\subseteq}
\title{The Game Saturation Number of a Graph}
\author{
James M. Carraher\thanks{Mathematics Department, University of Nebraska,
Lincoln, NB:  s-jcarrah1@math.unl.edu.  Research supported by NSF grant
DMS 09-14815}\,,
William B. Kinnersley\thanks{Mathematics Department, Ryerson University,
Toronto, ON, Canada, M5B 2K3: wkinners@ryerson.ca}\,,\\
Benjamin Reiniger\thanks{Mathematics Department, University of Illinois,
Urbana, IL: reinige1@illinois.edu}\,,
Douglas B. West\thanks{Mathematics Departments, Zhejiang Normal University
(Jinhua, Zhejiang, China) and University of Illinois (Urbana, IL, U.S.A):
west@math.uiuc.edu.  Research supported by Recruitment Program of Foreign
Experts, 1000 Talent Plan, State Administration of Foreign Experts Affairs,
China.}
}
\date{begun August 12, 2011}
\begin{document}

\date{\today}
\maketitle

\begin{abstract}
Given a family $\cF$ and a host graph $H$, a graph $G\esub H$ is
{\it $\cF$-saturated relative to $H$} if no subgraph of $G$ lies in $\cF$ but
adding any edge from $E(H)-E(G)$ to $G$ creates such a subgraph.  In the
{\it $\cF$-saturation game} on $H$, players {\it Max} and {\it Min} alternately
add edges of $H$ to $G$, avoiding subgraphs in $\cF$, until $G$ becomes
$\cF$-saturated relative to $H$.  They aim to maximize or minimize the
length of the game, respectively; $\Msat(\cF;H)$ denotes the length under
optimal play (when Max starts).

Let $\cO$ denote the family of all odd cycles and $\cT$ the family of
$n$-vertex trees, and write $F$ for $\cF$ when $\cF=\{F\}$.  Our results include
$\Msat(\cO;K_{2k})=k^2$, $\Msat(\cT;K_n)=\CH{n-2}2+1$ for $n\ge6$,
$\Msat(K_{1,3};K_n)=2\FL{n/2}$ for $n\ge8$, 
$\Msat(K_{1,r+1};K_n)=\FR{rn}2-\FR{r^2}8+O(1)$, and
$\size{\Msat(P_4;K_n)-\FR{4n-1}5}\le 1$.  We also determine
$\Msat(P_4;K_{m,n})$; with $m\ge n$, it is $n$ when $n$ is even, $m$ when $n$
is odd and $m$ is even, and $m+\FL{n/2}$ when $mn$ is odd.  Finally, we prove
the lower bound $\Msat(C_4;K_{n,n})\ge\FR1{10.4}n^{13/12}-O(n^{35/36})$.  The
results are very similar when Min plays first, except for the $P_4$-saturation
game on $K_{m,n}$.
\end{abstract}

\baselineskip16pt

\section{Introduction}
The archetypal question in extremal graph theory asks for the maximum number of 
edges in an $n$-vertex graph that does not contain a specified graph $F$ as a
subgraph.  The answer is called the {\em extremal number} of $F$, denoted
$\ex(F;n)$.  The celebrated theorem of Tur\'an~\cite{Tur} gives the answer
when $F$ is the complete graph $K_r$ and determines the largest $n$-vertex
graphs not containing $K_r$ (the {\em size} of a graph is the number of edges).

We consider maximal graphs not containing $F$.  The concept extends to a family
$\cF$ of graphs.  A graph $G$ is {\em $\cF$-saturated} if no subgraph of $G$
belongs to $\cF$ but $G+e$ contains a graph in $\cF$ whenever $e\in E(\Gb)$.
The extremal number $\ex(\cF;n)$ is the maximum size (number of edges) of an
$\cF$-saturated $n$-vertex graph.  (In all notation involving families of
graphs, we write $\cF$ as $F$ when $\cF$ consists of a single graph $F$.)

One may also ask for the minimum size of an $\cF$-saturated $n$-vertex graph;
this is the {\em saturation number} of $\cF$, denoted $\sat(\cF;n)$.
Erd\H{o}s, Hajnal, and Moon~\cite{EHM} initiated the study of graph saturation
by determining $\sat(K_r;n)$.

Generalizing further, a subgraph $G$ of a host graph $H$ is {\em $\cF$-saturated
relative to $H$} if no subgraph of $G$ lies in $\cF$ but adding any edge of
$E(H)-E(G)$ to $G$ completes a subgraph belonging to $\cF$.  The extremal
number and saturation number concern saturation relative to $K_n$, but
saturation has also been studied relative to other graphs.  For example,
Zarankiewicz's Problem involves saturation relative to $K_{n,n}$.  When two
agents have opposing interests in creating a large or a small $\cF$-saturated
graph, we obtain the ``saturation game''.

\begin{definition}
The {\em $\cF$-saturation game} on a host graph $H$ has players {\em Max} and
{\em Min}.  The players jointly construct a subgraph $G$ of $H$ by iteratively
adding one edge of $H$, constrained by $G$ having no subgraph that lies in
$\cF$.  The game ends when $G$ becomes $F$-saturated relative to $H$.  Max aims
to maximize the length of the game, while Min aims to minimize it.  When both
players play optimally, the length of the game is the
{\em game $\cF$-saturation number} of $H$, denoted $\Msat(\cF;H)$ when Max
starts the game and by $\msat(\cF;H)$ when Min starts it.  For clarity and for
consistency with the extremal and saturation numbers, we write the values as
$\Msat(\cF;n)$ and $\msat(\cF;n)$ when playing on $K_n$.
\end{definition}

The saturation game generalizes to any hereditary family of sets.  Let $D$ be a
family of subsets of a set $X$ such that every subset of a member of $D$ also
belongs to $D$.  The saturated subsets are the maximal elements of $D$.  Max
and Min alternately add elements of $X$ to a set that always lies in $D$.  The
game ends when a saturated set is reached, with Max and Min having the same
goals as before.  In the $\cF$-saturation game on $H$, we have $X=E(H)$, and
avoiding subgraphs in $\cF$ defines the hereditary family $D$.

Patk\'os and Vizer~\cite{PV} introduced this general model and studied the
case where $X_n$ is the family of $k$-element subsets of $\{1,\dots,n\}$ and
$D$ is the set of intersecting families of $k$-sets.  View $X_n$ as the
$n$-vertex complete $k$-uniform hypergraph $K_n^{(k)}$.  Letting $M$ be the
forbidden subgraph consisting of two disjoint edges, the game becomes
$\Msat(M;X_n)$.  The Erd\H{o}s--Ko--Rado Theorem~\cite{EKR} then states
$\ex(M;K_n^{(k)})=\CH{n-1}{k-1}\sim \FR1{(k-1)!}n^{k-1}$.  F\"uredi~\cite{Fu}
proved that $\sat(M;K_n^{(k)})\le \FR34 k^2$ when a projective plane of order
$r/2$ exists.  For $k\ge2$, Patk\'os and Vizer~\cite{PV} proved
$\Omega(n^{\FL{k/3}-5})\le \Msat(M;K_n^{(k)})\le O(n^{k-\sqrt k/2})$.

The saturation game is also related to other well-studied graph games.  In a
{\em Maker-Breaker} game, the players Maker and Breaker take turns choosing
edges of a host graph $H$, typically $K_n$.  Maker wins by claiming all of the
edges in a subgraph of $H$ having some specified property ${\cal P}$, and
Breaker wins by preventing this.  For example, Hefetz, Krivelevich,
Stojakovi\'c, and Szab\'o~\cite{HKSS1} studied Maker-Breaker games played on
$K_n$ in which Maker seeks to build non-planar graphs, non-$k$-colorable
graphs, or $K_t$-minors.
%
Several papers have considered the minimum number of turns needed for Maker to
win (see~\cite{FK,HKSS2}).  In this context, Breaker behaves like Max in the
saturation game, making the game last as long as possible.  In the saturation
game both players contribute edges, but here Maker cannot use the edges taken
by Breaker.

In an {\em Avoider-Enforcer} game, again two players alternately choose edges
of a fixed host graph.  Avoider wants to avoid creating any subgraph satisfying
$\cP$; Enforcer wants to force Avoider to build such a subgraph.  Hefetz,
Krivelevich, and Szab\'o~\cite{HKS} introduced such games, establishing general
results and studying the cases where Avoider seeks to avoid spanning trees or
spanning cycles of $H$.
In Avoider-Enforcer games winnable by Enforcer, one may ask how quickly
Enforcer can win.  Here Enforcer behaves like Min in the saturation game,
but again the the moves by Enforcer are not part of Avoider's subgraph
(see~\cite{AJSS,BM,BS2,HKSS3}).
%

The $\cF$-saturation game on $H$ is also related to the {\it $\cF$-free
process} on $H$, equivalent to both players moving randomly.  The length of
the process is the number of moves to reach a graph that is $\cF$-saturated
relative to $H$.  Usually $H=K_n$ (see~\cite{BK,BR,ESW,OT}), but~\cite{BB} is
more general.  For the $C_4$-free process on $K_{n,n}$, the lower bound
of~\cite{BB} specializes to $\Omega(n^{4/3}(2\log n)^{1/3})$.

\bigskip
The saturation game on graphs was introduced by F\" uredi, Reimer, and
Seress~\cite{FRS}; they studied $\Msat(K_3;n)$, calling it ``a variant of
Hajnal's triangle-free game''.  In Hajnal's original ``triangle-free game'',
the players aim only to avoid creating triangles, and the loser is the player
first forced to create one (Ferrara, Jacobson, and Harris~\cite{FJH} considered
the generalization of Hajnal's loser criterion to arbitrary $\cF$ and $G$).
Since the $F$-saturation game always produces an $F$-saturated graph,
$n-1 = \sat(K_3;n) \le \Msat(K_3;n) \le \ex(K_3;n) = \floor{n^2/4}$; hence
$\Msat(K_3;n) \in \Omega(n)\cap O(n^2)$.  F\"uredi et al.~\cite{FRS} proved
$\Msat(K_3;n) \in \Omega(n \lg n)$.  Erd\H{o}s (unpublished) stated
$\Msat(K_3;n) \le n^2/5$.  The correct order of growth remains unknown.

The $P_3$-saturation game was studied by Cranston, Kinnersley, O, and
West~\cite{CKOW}; here $P_k$ denotes the $k$-vertex path.  The subgraphs of $H$
that are $P_3$-saturated relative to $H$ are precisely the maximal matchings in
$H$.  Thus the game $P_3$-saturation number is just the {\em game matching
number}, with $\alpha'_g(G)$ and $\hat\alpha'_g(G)$ denoting the values of the
Max-start and Min-start games since $\alpha'(G)$ denotes the maximum size of a
matching in $G$.  They proved $\alpha'_g(G)\ge \FR23\alpha'(G)$ for every graph
$G$ (with equality for some split graphs) and $\alpha'_g(G)\ge \FR34\alpha'(G)$
when $G$ is a forest (with equality for some trees).  The minimum of
$\alpha'_g(G)$ over $n$-vertex $3$-regular graphs is between $n/3$ and $7n/18$.

We have mentioned bounds on $\alpha'_g$ but not $\hat\alpha'_g$ because the two
parameters never differ by more than $1$ (see \cite{CKOW}).  This does not hold
for $\cF$-saturation in general.  For example, when the host graph is obtained
from a star with $m$ edges by subdividing one edge, the Max-start
$2K_2$-saturation number is $m$, but the Min-start $2K_2$-saturation number is
$2$.  As a less artificial example, we will show that
$|\Msat(P_4;K_{m,n})-\msat(P_4;K_{m,n})|$ can be large, where $K_{m,n}$ is the
complete bipartite graph with part-sizes $m$ and $n$.  In most instances that
we study, the choice of the starting player does not affect the outcome by much.

In Section~\ref{Knsec}, we study the $\cF$-saturation games on $K_n$ for
$\cF\in\{\cO,\cT_n,\{K_{1,r+1}\},\{P_4\}\}$, where $\cO$ is the family of all
odd cycles and $\cT_n$ is the family of $n$-vertex trees.
We first prove $\Msat(\cO;2k)=\msat(\cO;2k)=k^2$, achieving the trivial upper
bound $\ex(\cO;2k)$.  
For $n\ge3$, we prove $\Msat(\cT_n;n)=\msat(\cT_n;n)=\CH{n-2}2+1$, except
$\Msat(\cT_5;5)=6$ and $\msat(\cT_4;4)=3$; note $\ex(\cT_n;n)=\CH{n-1}2$.
Hefetz et al.~\cite{HKNS} have since studied more general versions of both
of these problems.  They studied $\Msat(\cC_k;n)$ and $\Msat(\cX_k)$ where
$\cC_k$ is the family of $k$-connected graphs with $n$ vertices and
$\cX_k$ is the family of non-$k$-colorable graphs.  In both cases, the
value is close to the extremal number.  Lee and Riet~\cite{LR} have generalized
the tree problem in a different direction, studying $\Msat(\cT_k;n)$.

Always $\Msat(K_{1,3};n)$ and $\msat(K_{1,3};n)$ lie in $\{n,n-1\}$.  Except
for $n\in\{2,3,4,7\}$, they are unequal, with $\Msat(K_{1,3};n)$ being the even
value and $\msat(K_{1,3};n)$ being the odd value.  That is,
$\Msat(K_{1,3};n)=2\FL{n/2}$ and $\msat(K_{1,3};n)=2\CL{n/2}-1$ when $n\ge8$.
Note that $\ex(K_{1,3};n)=n$.  For $n>r>2$, it has been checked by computer
that $\Msat(K_{1,r+1};n)=\FL{\FR{rn-1}2}$ when $n\le8$.  We ask whether this
holds for larger $n$; note that $\ex(K_{1,r+1};n)=\FL{\FR{rn}2}$.  K\'aszonyi
and Tuza~\cite{KT} proved $\sat(K_{1,r+1})=\CL{\FR{rn}2-\FR{(r+1)^2}8}$ for
$n\ge 3r/2$.  Lee and Riet~\cite{LR} proved $\Msat(K_{1,r+1};n)\ge (rn/2)-k+1$.

For the $P_4$-saturation game on $K_n$, the value is not asymptotic to the
extremal number.  We prove $\size{\Msat(P_4;n)-\FR{4n-1}5}\le 1$ and
$\size{\msat(P_4;n)-\FR{4n}5}\le .6$, while $\ex(P_4;n)\in\{n,n-1\}$.
Lee and Riet~\cite{LR} proved $n-1\le\Msat(P_5;n)\le n+2$.

In Section~\ref{P4sec}, we study the $P_4$-saturation game on $K_{m,n}$; we may
assume $m\ge n$.  The choice of who starts the game can matter a lot, as do
the parities of $m$ and $n$.  The value of $\Msat(P_4;K_{m,n})$ is $n$ when $n$
is even (equaling $\sat(P_4;K_{m,n})$), $m$ when $m$ is even and $n$ is odd,
and $m+\FL{n/2}$ when $mn$ is odd.  The value of $\msat(P_4;K_{m,n})$ is $m$
when $n\le2$ and $m+\FL{n/2}-\epsilon$ when $n>2$, where $\epsilon=0$ when $mn$
is even and $\epsilon=1$ when $mn$ is odd.

Note that the difference is $m-2$ when $n=2$, and for larger $n$ the difference
is $(n-1)/2$ when $m$ is even and $n$ is odd.  Note also that
$\sat(P_4;K_{m,n})=\min\{m,n\}$, so when $\min\{m,n\}$ is even we obtain an
example where $\Msat(P_4;G)=\sat(P_4,G)$.  We ask whether there are other
interesting examples where $\Msat(\cF;n)$ or $\msat(\cF;n)$ equals
$\sat(\cF;n)$; \cite{FFGJ} provides a survey of saturation numbers as of 2009.

In Section~\ref{C4sec}, we study the $C_4$-saturation game on $K_{n,n}$.
This game is the natural bipartite analogue of the triangle-saturation game on
$K_n$ studied by F\"uredi, Reimer, and Seress~\cite{FRS}.  Every subgraph that
is $C_4$-saturated relative to $K_{n,n}$ is connected, so
$\Msat(C_4;K_{n,n}) = \Omega(n)$.  On the other hand, F\"uredi~\cite{Fur}
proved $\ex(C_4;K_{n,n})= n^{3/2} + O(n^{4/3})$, so
$\Msat(C_4;K_{n,n}) = O(n^{3/2})$.  Our main result is a polynomial improvement
over the natural lower bound: 
$\Msat(C_4;K_{n,n})\ge\FR1{10.4}n^{13/12}-O(n^{35/36})$.

Our results leave many open questions.  The most interesting specific question
is the order of growth of $\Msat(C_4;K_{n,n})$.  One would also like to
understand the conditions under which $\Msat(\cF;n)$, $\Msat(\cF;K_{m,n})$,
or $\Msat(\cF;H)$ does not differ much from the value of the corresponding
Min-start game.

\section{Saturation games on complete graphs}\label{Knsec}

We begin with saturation games on the complete graph $K_n$.
A graph is {\it nontrivial} if it has at least one edge.

\begin{theorem}
$\Msat(\cO;2k)=\msat(\cO;2k)=k^2=\ex(\cO;2k)$.
\end{theorem}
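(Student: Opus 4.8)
The plan is to first identify the saturated graphs and then give Max a forcing strategy. A graph contains a subgraph from $\cO$ precisely when it is non-bipartite, so a spanning subgraph $G$ of $K_{2k}$ is $\cO$-saturated relative to $K_{2k}$ exactly when $G$ is a spanning complete bipartite graph $K_{a,2k-a}$ with $1\le a\le 2k-1$: if $G$ is disconnected then adding an edge between two components creates no cycle, and if $G$ is connected and bipartite with parts $A$ and $B$ but lacks some edge joining $A$ to $B$, then adding that edge leaves $G$ bipartite; conversely, adding any edge inside a part of a spanning $K_{A,B}$ creates a triangle. Hence every play of the game ends at some $K_{a,2k-a}$, so the game length is $a(2k-a)\le k^2$, with equality only when $a=k$. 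This already gives $\ex(\cO;2k)=k^2$ and the ``trivial'' upper bounds $\Msat(\cO;2k)\le k^2$ and $\msat(\cO;2k)\le k^2$, so everything reduces to showing that Max can force the final graph to be $K_{k,k}$, in both the Max-start and the Min-start games. The case $k=1$ is immediate, so assume $k\ge 2$.

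The strategy I propose for Max is to maintain the invariant $(\star)$: \emph{after each of Max's moves, every non-singleton component of $G$ has a balanced bipartition.} Since a balanced component has even order and $2k$ is even, $(\star)$ forces the number of isolated vertices to be even as well. First I would check that $(\star)$ suffices. Suppose the game ends after a move adding an edge $e$, so $G=K_{a,2k-a}$ and $G$ is connected. If Max made that move, then $(\star)$ holds for $G$, so its unique (non-singleton) component is balanced and $a=k$. If Min made it, then $(\star)$ holds for $G-e$, the graph present just before (which followed a Max move); when $a,2k-a\ge 2$ the graph $G-e$ is still connected, hence a balanced non-singleton component, so again $a=k$, while the only remaining possibility is $G=K_{1,2k-1}$ with $G-e=K_{1,2k-2}\cup K_1$, contradicting $(\star)$ since $K_{1,2k-2}$ is an unbalanced non-singleton. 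Thus $(\star)$ forces $a=k$ and hence a game of length $k^2$.

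It then remains to show Max can keep $(\star)$ alive. In the Max-start game Max opens by joining two isolated vertices, which establishes $(\star)$; in the Min-start game Min's forced opening move (also joining two isolated vertices) leaves $(\star)$ true. Thereafter Max replies to Min as follows. If Min's move preserves $(\star)$ --- which happens when Min fills an edge, joins two isolated vertices, or joins two balanced non-singleton components --- then Max plays any $(\star)$-preserving move: merge two non-singleton components if at least two remain, else merge two isolated vertices if at least two remain, else fill a missing edge of the sole (balanced, incomplete) component; one of these is always available while the game continues. The only way Min can break $(\star)$ is to attach an isolated vertex to a balanced non-singleton component $C$, producing an off-by-one component $C'$; but then the isolated-vertex count has just become odd, so some isolated vertex $v$ remains, and Max restores $(\star)$ by joining $v$ to the larger side of $C'$, re-balancing it. Thus $(\star)$ holds after every Max move, so by the previous paragraph the game lasts exactly $k^2$ moves; with the upper bound this yields $\Msat(\cO;2k)=\msat(\cO;2k)=k^2=\ex(\cO;2k)$.

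The delicate point, and the one deserving the most care in the write-up, is the endgame interaction with $(\star)$: one must be certain that Min is never handed a position from which a single move ends the game on an unbalanced graph. The only such dangerous configuration is ``the star $K_{1,2k-2}$ together with one isolated vertex,'' and the balanced half of $(\star)$ --- via its parity consequence for the isolated-vertex count --- is exactly what prevents this configuration from arising immediately after one of Max's moves. Spelling out that parity bookkeeping, checking that Max always has a $(\star)$-preserving reply, and handling the few degenerate small positions, is the substance of the formal proof.
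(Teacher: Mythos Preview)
Your proof is correct and follows essentially the same approach as the paper: maintain the invariant that after each of Max's moves every nontrivial component has a balanced bipartition, and re-balance whenever Min attaches an isolated vertex (using the parity of $2k$ to guarantee a spare isolate). Your write-up is in fact more careful than the paper's in one respect: you explicitly verify that the invariant forces the terminal graph to be $K_{k,k}$ regardless of who plays the last move, via the $G-e$ analysis ruling out $K_{1,2k-1}$, whereas the paper leaves that step implicit.
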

\begin{proof}
An $\mathcal{O}$-saturated graph is a complete bipartite graph.  With $2k$
vertices, the largest has parts of equal size.  It therefore suffices to give
Max a strategy ensuring that after each turn by Max the bipartition of each
nontrivial component is balanced.  Whether Max or Min starts, the first move by
Max ensures this (yielding two isolated edges if Min moves first).

Subsequently, a move by Min can connect two nontrivial components, lie within
a component, connect two isolated vertices, or connect an isolated vertex
to a nontrivial component.  In the last case, since $2k$ is even, Max can
connect another isolated vertex to the same nontrivial component, keeping the
bipartition balanced.  In the other cases, Max can play an edge within a 
nontrivial component or, if they are all complete bipartite (and balanced),
connect two nontrivial components or, if there is just one nontrivial component
and it is balanced, connect two isolated vertices.  If no move is available,
then the game has ended, with $k^2$ moves played.
\end{proof}

The disjoint union of graphs $G$ and $H$ is denoted $G+H$.  The largest
subgraph of $K_n$ containing no spanning tree of $K_n$ is $K_{n-1}+K_1$, with
$\CH{n-1}2$ edges.

\begin{theorem}
If $n\ge3$, then $\Msat(\cT_n;n)=\msat(\cT_n;n)=\CH{n-2}2+1$, except that
$\Msat(\cT_5;5)=6$ and $\msat(\cT_4;4)=3$.
\end{theorem}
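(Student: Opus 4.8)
The plan is to analyze the structure of graphs that are $\cT_n$-saturated relative to $K_n$ and then exhibit matching strategies for Max and Min. First I would observe that a graph $G$ on $n$ vertices contains a spanning tree if and only if it is connected, so $G$ contains no member of $\cT_n$ exactly when $G$ is disconnected, and $G$ is $\cT_n$-saturated relative to $K_n$ exactly when $G$ is disconnected but adding any non-edge creates a connected (hence spanning) graph. A disconnected graph has this property precisely when it has exactly two components, since adding an edge between two components of a graph with three or more components leaves it disconnected. Thus the terminal positions of the game are exactly the graphs $K_a \cup K_b$ with $a+b=n$ and $a,b \ge 1$ \emph{on the given vertex set} — more precisely, disconnected graphs with exactly two components, each of which need not be complete, but once such a position is reached no further legal move exists (any added edge would either stay inside a component or merge the two components into a spanning connected graph). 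Hence the game length is $e(G)$ where $G$ is the final two-component graph, and the two components have sizes $a$ and $n-a$ for some $a$; the number of edges is at most $\CH a2 + \CH{n-a}2$, maximized at $a=1$ (or $a=n-1$) giving $\CH{n-1}2$, and the relevant sub-maximal structure for the game is $a=2$, giving $\CH{n-2}2+1$.

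Next I would give \textbf{Max's strategy} to force at least $\CH{n-2}2+1$ edges (when $n \ge 6$). The idea: Max wants the final position to be $K_{n-2} \cup K_2$, i.e. Max wants to prevent the two components from ever becoming ``too balanced,'' keeping one component as small as possible — ideally an isolated edge on two fixed vertices $\{x,y\}$. Max's strategy is to always play an edge inside the current large component if one is available, and otherwise to play a safe filler edge, while being careful never to be the player who first isolates a pair of vertices in a way that lets Min finish early with a balanced-ish split. The key invariant to maintain is something like: after each of Max's moves, the vertex set $\{x,y\}$ induces no edges to the rest, or the ``small side'' has at most two vertices. Since the large component on $\ge n-2$ vertices needs $\CH{n-2}2$ internal edges to become complete, and each such edge is a legal move (it never creates a spanning tree as long as $\{x,y\}$ stays separated), Max can keep feeding these in; Min cannot end the game until that component is complete and the small side is saturated, forcing $\ge \CH{n-2}2 + 1$ moves. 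Then I would give \textbf{Min's strategy} for the matching upper bound: Min tries to drive the final split to be as unbalanced as possible too — also aiming for $K_{n-2}\cup K_2$ — by quickly committing two vertices $\{x,y\}$ to be a separated pair (playing the edge $xy$ early, or refusing to connect $x$ or $y$ to anything) and then mirroring so that the game cannot overshoot $\CH{n-2}2+1$; the point is that as soon as the big component is complete and $\{x,y\}$ is a separate edge, no legal move remains. The two strategies pin the value exactly at $\CH{n-2}2+1$ for $n\ge 6$.

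Finally I would handle the \textbf{small exceptional cases} $n \in \{3,4,5\}$ by (essentially) direct case analysis, since there the generic strategies break down: when $n$ is small, after Max's first move the forced structure can differ, and a ``$K_2$ on the side'' may be larger than desirable relative to the whole. For $n=3$ one checks $\CH12+1 = 1$ against the actual play on $K_3$ (the game ends as soon as one edge is drawn, since $K_3$ minus an edge is a spanning path — wait, that is connected, so actually two edges forming a path give a spanning tree, illegal; one edge $K_2\cup K_1$ is already two components, saturated — so the value is $1 = \CH12+1$, consistent). For $n=4$: a Min-start game can be forced down to $3$ edges (the $K_3 \cup K_1$ split reached greedily) rather than $\CH22+1 = 2$; I would verify Min cannot do better than $3$ and Max cannot force more, and that a Max-start game gives $\CH22+1 = 2$... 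I need to recheck the stated exception: the theorem says $\msat(\cT_4;4)=3$, which \emph{exceeds} $\CH22+1=2$, so here Min cannot avoid a long game — the first move (by Min) plus Max's responses force the $K_3\cup K_1$ outcome with $3$ edges. For $n=5$: $\Msat(\cT_5;5)=6$ versus $\CH32+1 = 4$; again Max can force a $6$-edge position (the split $K_4\cup K_1$? that has $\CH42 = 6$ edges — no wait that's the \emph{maximum}, $\ex(\cT_5;5) = \CH42 = 6$), so for $n=5$ Max can force the extremal $K_4\cup K_1$; I would show Min cannot prevent this when $n=5$ but can when $n \ge 6$. \textbf{The main obstacle} I anticipate is precisely pinning down why the threshold is exactly $n=6$: showing that for $n\ge 6$ Min has enough room to force the pair $\{x,y\}$ apart (keeping the small component of size $2$, not $1$), preventing Max from reaching $K_{n-1}\cup K_1$, while for $n=5$ Max's extra leverage lets him reach $K_4 \cup K_1$; and dually, why Max cannot do better than $\CH{n-2}2 + 1$ for $n\ge 6$ — i.e., that Min can always sacrifice exactly one vertex's worth of potential edges but no more. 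This balancing-vs-unbalancing tension, and the careful bookkeeping of which vertices are committed to which side, is where the real work lies.
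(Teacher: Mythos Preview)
Your structural setup is on the right track: the $\cT_n$-saturated subgraphs of $K_n$ are exactly the graphs $K_a+K_{n-a}$, and the game value is determined by which split the players can force.  (Your sentence that the two components ``need not be complete'' is a slip; if a component is not complete, an internal edge is still a legal move, so the terminal positions really are complete on each side.)

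The genuine gap is that you have the players' goals reversed.  The edge count $\binom a2+\binom{n-a}2$ is convex in $a$, so Max wants the split as \emph{unbalanced} as possible (ideally $K_{n-1}+K_1$, the extremal value $\binom{n-1}2$), while Min wants it as \emph{balanced} as possible.  The value $\binom{n-2}2+1$ is not a shared target; it is the compromise that results when Max can guarantee the small side has at most two vertices and Min can guarantee it has at least two.  Your proposed Min strategy---fix $\{x,y\}$ early and ``refuse to connect $x$ or $y$ to anything''---cannot work, because Min does not control Max's moves: if Min plays $xy$ as an isolated edge, Max can immediately attach $x$ to the big component on the next turn.

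The paper's argument is organized around a different invariant.  For the lower bound, Max always leaves exactly one nontrivial component after his move (absorbing any second nontrivial component Min creates); a second component can then survive only when Min joins the last two isolated vertices, forcing the outcome $K_{n-2}+K_2$.  For the upper bound, Min aims to leave \emph{at least two} nontrivial components standing; once that happens after a Max move, Min can maintain it to the end.  To stop Min, Max must keep a single nontrivial component, and then Min repeatedly plays isolated edges to burn Max's moves.  A parity argument on the number of isolated vertices (with a small detour through $P_3$, $K_3$, $K_4$ for the first few moves) shows that Max eventually cannot avoid leaving a second nontrivial component---except precisely when $n=5$ in the Max-start game and $n=4$ in the Min-start game, which produces the two exceptions.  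Your proposal does not contain this component-count/parity mechanism, and without it neither the upper bound nor the location of the exceptional cases can be established.
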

\begin{proof}
Every $\cT_n$-saturated subgraph of $K_n$ has the form $K_r+K_{n-r}$ for
some $r$.  Throughout the game there are some number of components, and a move
either joins two components or adds an edge within a component.

If some move by Max leaves at least two nontrivial components, then Min can
maintain this condition after each subsequent move until all vertices are in
nontrivial components, ensuring the upper bound.  Min connects two isolated
vertices if two isolated vertices remain, increasing the number of nontrivial
components to at least $3$, and Max then cannot reduce it below $2$.  When
only one isolated vertex remains, Min connects it to a nontrivial component.

To exceed the upper bound, Max must therefore always leave only one nontrivial
component.  If the move by Max leaves an even number of isolated vertices, then
Min makes an isolated edge, and Max must connect the nontrivial components.
This repeats until Min connects the last two isolated vertices to make a second
nontrivial component that Max cannot absorb.

If the number of isolated vertices is odd after the first move by Max (and the
number of nontrivial components is $1$), then Min works to fix the parity.  If
Max starts, then $n$ is odd.  Min creates $P_3$.  Max now must enlarge the
component to $P_4$ or $K_{1,3}$ to keep the number of isolates odd.  Because
$K_4$ has an even number of edges, Max eventually must reduce the number of
isolates by $1$ or create a second nontrivial component, unless $n=5$.

If Min starts, then Max must create $P_3$, and $n$ is even.  Now Min completes
the triangle, and again Max must reduce the number of isolates by $1$ or
create a second nontrivial component, unless $n=4$.

Max can enforce the lower bound by always leaving only one nontrivial component.
Only when Min connects the last two isolated vertices will a second component
survive.
\end{proof}

Let $kG$ denote the disjoint union of $k$ copies of $G$.

\begin{theorem}
\begin{align*}
\Msat(K_{1,3};n)&=\begin{cases}   n & \text{when }n\in\{3,7\}\cup2\NN-\{2\} \\
                                n-1 & \text{otherwise} \end{cases} \\
\msat(K_{1,3};n)&=\begin{cases}  n-1& \text{when }n\in2\NN-\{4\} \\
                                  n & \text{otherwise} \end{cases}
\end{align*}
\end{theorem}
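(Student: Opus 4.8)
The plan is to analyze the $K_{1,3}$-saturation game structurally. A graph is $K_{1,3}$-free exactly when it has maximum degree at most $2$, i.e., it is a disjoint union of paths and cycles. It is $K_{1,3}$-saturated relative to $K_n$ precisely when, in addition, adding any missing edge would create a vertex of degree $3$; equivalently, every vertex that is not of degree $2$ forms a component that is an isolated edge or isolated vertex, with at most one such "deficient" structure surviving. More carefully, in a union of paths and cycles on $n$ vertices, a nonedge $uv$ fails to create a $K_{1,3}$ only if both $u$ and $v$ already have degree $2$; so saturation means the set of vertices of degree $<2$ spans a clique in the complement's complement — i.e., all degree-$<2$ vertices lie in a single component, which must then be a single vertex or a single edge ($K_1$ or $K_2$). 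Hence the final graph is a disjoint union of cycles together with at most one extra path that is a single vertex or a single edge. Such a graph has $n$ edges if there is no leftover or the leftover is $K_2$ absorbed differently — the precise count is $n$ when all of $V$ is covered by cycles (only possible when the cycle lengths sum to $n$), $n-1$ when one vertex is left isolated, and these are the only two possibilities since a leftover $K_2$ contributes one edge on two vertices, giving $n-1$ again. So the length of the game is always $n$ or $n-1$, matching the claim, and the entire problem reduces to determining the parity of the final graph's edge count under optimal play.

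Next I would set up the parity bookkeeping that drives everything. Track $t$, the number of components that are nontrivial paths (not cycles, not isolated vertices); call these the "open" components, since each has two endpoints available for extension. A move by either player does one of: (a) join two open components into one (decreasing $t$ by $1$, or by $2$ while creating a cycle if it closes up a single path — but joining two distinct open paths always gives one open path, so $t$ decreases by $1$); (b) close an open path into a cycle ($t$ decreases by $1$); (c) start a new open path from one or two isolated vertices ($t$ increases by $1$, and the number of isolated vertices drops by $1$ or $2$); (d) extend an open path by absorbing an isolated vertex ($t$ unchanged, isolated vertices drop by $1$); (e) add an edge inside... but no internal edge is legal since all internal vertices have degree $2$ already once a path has $\ge 3$ vertices — so the only "inside" move is closing a path, already covered. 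The game ends when no legal move remains: this happens exactly when there is at most one open component and it cannot be extended, and at most one isolated vertex remains uncovered. The goal of each player is to steer the final configuration to have the desired number of edges ($n$ for Max, $n-1$ for Min, or vice versa depending on who starts and the parity of $n$).

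The heart of the argument is then a strategy-stealing / parity-forcing analysis. Max wants the game to end with every vertex on a cycle (edge count $n$) OR with a leftover $K_2$; Min wants it to end with an isolated vertex (edge count $n-1$). Observe that whether an isolated vertex must be left over at the end is governed by a parity invariant: each time two isolated vertices are paired into a new open path, or an isolated vertex is absorbed into an open path, the count of isolated vertices changes; the final configuration leaves one isolated vertex iff the number of isolated vertices is odd at the critical moment when components start merging and closing. Max, wanting to avoid a stranded vertex, will try to keep the number of isolated vertices even (pairing them up) and to close everything into cycles; Min will try to force an odd leftover. Since the players alternate and each controls one move per round, this becomes a question of who can enforce the parity they want, which depends on $n \bmod 2$ and on who moves first — explaining the even/odd split in the statement. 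The special values $n \in \{2,3,4,7\}$ are small cases where the general forcing argument breaks down: for $n=2$ the game is trivial ($K_2$, one edge $= n-1$), for $n=3$ the only $K_{1,3}$-free saturated graph on $3$ vertices is $K_3$ with $3 = n$ edges regardless of play, for $n=4$ one checks the two-move game by hand, and for $n=7$ a small parity accident (related to $7$ being odd but small enough that Min cannot set up the needed isolated-vertex reserve) must be verified directly. I expect the main obstacle to be making the parity-forcing strategies fully rigorous — precisely specifying Max's and Min's responses to every move type so that the invariant (number of isolated vertices has the desired parity, and open components are controlled) is genuinely maintained, and carefully delimiting exactly which small $n$ are exceptional. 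The structural reduction in the first paragraph is routine; the combinatorial game strategy in the last is where the real work lies.
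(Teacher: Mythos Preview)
Your structural setup is correct and matches the paper's: $K_{1,3}$-saturated subgraphs of $K_n$ are disjoint unions of cycles together with at most one isolated vertex or isolated edge, so the game length is always $n$ or $n-1$. (You do slip once in writing ``Max wants the game to end with every vertex on a cycle (edge count $n$) OR with a leftover $K_2$'': a leftover $K_2$ gives $n-1$ edges, which is what Min wants, not Max.)

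The genuine gap is that you have not supplied the strategies, and you say so yourself. Tracking the number $t$ of open paths and the parity of isolated vertices is a reasonable bookkeeping device, but it does not by itself yield a forcing argument: both players can change the parity of the isolated-vertex count on any given move (by absorbing one isolated vertex versus starting a new edge from two), so neither invariant you name is actually preserved by one side against arbitrary opposition. You need to say concretely what the winning player does in response to each move type so that some useful invariant is maintained, and you have not done this. The paper's proof fills exactly this hole with a specific reduction: for $n\ge 8$, the player $W$ who should win (first mover if $n$ is even, second mover if $n$ is odd) always leaves the position in the form ``one nontrivial open path, an even number of isolated vertices, and some cycles,'' each round decreasing the number of isolates by $2$; when six isolates remain, the position devolves into a fresh game on $5$ or $6$ vertices with the right player to move, and those base cases are checked by hand (along with $n\le 4$ and $n=7$). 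This reduction-to-small-$n$ idea is the missing engine in your proposal; the vague ``parity-forcing'' does not substitute for it.
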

\begin{proof}
All $K_{1,3}$-saturated graphs are disjoint unions of cycles plus possibly one
isolated vertex or isolated edge (not both).  Hence the only possible outcomes
are $n$ (call this {\it Max wins}) or $n-1$ (call this {\it Min wins}).  Let
$X(n)$ and $Y(n)$ denote the Max-start and Min-start $K_{1,3}$-saturation games
on $K_n$, respectively.

For $n\ge5$, our claim is that the first player wins when $n$ is even and
the second player wins when $n$ is odd, except that Max wins $X(7)$.  After
giving specific strategies for $n\le7$, we provide general strategies for
$n\ge8$ that reduce the problem to the cases $n\in\{5,6\}$.

When $n\le3$, there is no claw, so Min wins when $n\le2$ and Max wins when
$n=3$, no matter who starts.  When $n=4$, Max can create $2K_2$ or $P_4$ to
win, no matter who starts.

In $Y(5)$, Max creates $2K_2$ and can then force $C_5$.  In $X(5)$, Min creates
$2K_2$ and can then close a cycle on the next turn to win.

In $X(6)$, Max completes a triangle if Min makes $P_3$, reducing to $Y(3)$,
which Max wins.  If Min makes $2K_2$, then Max makes $3K_2$ and next $P_6$ to
win.  In $Y(6)$, Min makes $P_4$ on the second move and can then close a
$4$-cycle or $5$-cycle to win.

In $X(7)$, if Min makes $P_3$, then Max closes the $3$-cycle and wins $Y(4)$.
If Min makes $2K_2$, then Max makes $3K_2$.  Whether Min next makes $P_3$ or
$P_4$, Max closes the cycle and wins.  In $Y(7)$, Max makes $P_3$ and will
later win a game played on three or four vertices.

Now assume $n\ge8$.  Let $W$ be the first player when $n$ is even and the
second player when $n$ is odd; we give a winning strategy for $W$.  Player $W$
always leaves the components being one nontrivial path, an even number of
isolated vertices, and some number of cycles, until the number of isolated
vertices is $6$.  By making $P_2$ or $P_3$ in the first round, $W$ initiates
this process.  If the other player $V$ closes the cycle, then $W$ starts a new
path, while if $V$ extends the path or makes an isolated edge the path is left
longer by two edges.  In either case, the number of isolated vertices decreases
by $2$.

When six isolated vertices remain, if $V$ closes the cycle or makes an isolated edge and lets $W$ close the cycle, then the remaining game is the game on six
vertices started by $W$.  If $V$ extends the path, then $W$ closes the cycle to
leave the game on five vertices started by $V$.  We have shown that when $n=6$
the game is won by the first player, and when $n=5$ the game is won by the
second player.
\end{proof}

Because there are only two possible (consecutive) lengths of the
$K_{1,3}$-saturation game on $K_n$, the outcome is determined by who plays
last.  Ferrara, Jacobson, and Harris~\cite{FJH} studied that question
explicitly; in their game the player who moves last wins.  Although their
analysis is similar to ours due to the structure of $K_{1,3}$-saturated graphs,
their result is different: in their game, for $n\ge5$, the first player wins if
and only if $n$ is even, except $n=7$.  In particular, under their criterion
for winning, the number of moves played will always be $n-1$ (except $n=7$).

Our final game on $K_n$ is the $P_4$-saturation game.  Note that during the
game, all components of the built subgraph must be stars or triangles.
Since Max seeks a large ratio of number of edges to number of vertices,
triangles and large stars are beneficial to Max, while small stars are
beneficial to Min.  However, stars with two edges are dangerous for Min,
since Max can turn them into triangles.  This intuition motivates the
strategies for the players.

\begin{theorem}
For $n\ge4$,
\begin{align*}
\FR{4n-6}5\le&\Msat(P_4;n)\le \FR{4n+4}5\\
\FR{4n-3}5\le&\msat(P_4;n)\le \FR{4n+3}5.
\end{align*}
\end{theorem}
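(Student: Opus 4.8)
The plan is to establish the four inequalities separately, using the structural observation that throughout the $P_4$-saturation game every component of the built graph $G$ is a star or a triangle (since a $P_4$-free graph has each component a star or triangle), and that $G$ is $P_4$-saturated relative to $K_n$ exactly when adding any edge between two components, or within a star component that has at least two edges, would create a $P_4$; one checks this means at most one component can be a star with two or more edges, at most one can be a single edge, and all others are triangles or isolated vertices (with at most one isolated vertex). So the final graph has vertex count $n$ split among $t$ triangles, possibly one ``big'' star $K_{1,s}$, possibly one extra edge, and possibly one isolated vertex, giving edge count roughly $3t + s$ with $3t + s \approx n$; the length of the game is this edge count. Max wants to bias toward triangles (ratio $1$ edge per vertex, i.e. $3$ edges per $3$ vertices is the good case — wait, a triangle has $3$ edges on $3$ vertices, a star $K_{1,s}$ has $s$ edges on $s+1$ vertices), so actually Max prefers triangles and one huge star, Min prefers many small stars ($K_{1,2}$'s are forbidden to Min as they become triangles, so Min aims for $K_{1,1}$'s but those are limited to one, hence Min settles for something like alternating edges that get absorbed). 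The target value $4n/5$ suggests the equilibrium mixes blocks of $5$ vertices carrying $4$ edges, e.g. a triangle plus an edge, or a $K_{1,3}$ plus a trivial vertex.

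For the upper bounds I would give Min a strategy. Min's strategy should repeatedly create a small star or isolated edge that Max is then forced to either absorb (reducing vertex budget without adding many edges) or convert to a triangle (costing $3$ edges on $3$ vertices but tying up those vertices). Concretely, Min can play so that after each of Min's turns the ``active'' part of $G$ consists of one component that is a star or triangle plus a controlled supply of isolated vertices; by always making the cheapest legal move — joining an isolated vertex to create $K_{1,1}$ or extending toward a configuration Max cannot exploit — Min forces the graph to fill up in chunks of about $5$ vertices per $4$ edges. The Max-start and Min-start versions differ by one move at the start, which accounts for the difference between the $\FR{4n+4}5$ and $\FR{4n+3}5$ bounds and the shift in the additive constant. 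I would track a potential function like (number of edges played) $- \FR45(\text{number of non-isolated vertices})$ and show Min can keep it bounded above by a small constant after each of Min's moves.

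For the lower bounds I would give Max a strategy. Max wants triangles and one large star. When Min creates a $K_{1,2}$ (a star with two edges), Max immediately completes it to a triangle — this is the ``dangerous for Min'' phenomenon noted before the theorem — gaining $3$ edges on $3$ vertices. When Min instead makes an isolated edge $K_2$, Max attaches it to the designated big star (or starts building the big star), so that edge is not wasted. The accounting: each ``triangle block'' Min is maneuvered into uses $3$ vertices for $3$ edges, and the single growing star uses its vertices at rate nearly $1$ edge per vertex as well; the loss of $\FR15$ per vertex versus the ideal comes from the isolated vertices and the at-most-one leftover edge that must remain. Again I would use a potential function, now lower-bounded: Max maintains that (edges played) $\ge \FR45(\text{vertices used}) - O(1)$ after each of Max's moves, by casework on Min's possible move types (new edge, extend existing star, close a triangle, attach isolated vertex to a component).

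The main obstacle will be the casework in the strategy-stealing-style arguments: Min's move can interact with the current configuration in several ways (create a new component, extend the big star, extend a small star into a forbidden $K_{1,2}$-that-Max-wants, attach to an isolated vertex, or — crucially — Min might try to sabotage by doing the move Max ``wanted'' to do, e.g. completing a triangle herself or enlarging the big star), and in each case one must verify the responding player has a move restoring the invariant without violating $P_4$-freeness and without prematurely ending the game. Handling the endgame — the last few vertices, where parity forces the additive slack and explains why the bounds are $\FR{4n\pm c}5$ rather than exactly $\FR{4n}5$, and where the $n\ge4$ hypothesis is needed — is the other delicate point; I would isolate the generic ``middle game'' potential argument and then dispatch the bounded-size endgame by hand.
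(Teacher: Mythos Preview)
Your high-level plan---give Min a strategy for the upper bounds, Max a strategy for the lower bounds, and track a potential through the game---matches the paper's architecture. But two concrete steps would fail as written.

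First, your structural characterization of $P_4$-saturated subgraphs of $K_n$ is wrong. You claim at most one component can be a star with two or more edges; in fact any number of stars $K_{1,s}$ with $s\ge3$ can coexist (any edge between two such stars creates a $P_4$). What is true is that no component is $K_{1,2}$ (it could be closed to a triangle), at most one component is $K_2$, and an isolated vertex can survive only if every nontrivial component is a triangle (otherwise it could be attached to a star center). So the final graph is not ``$t$ triangles plus one big star''; it is triangles plus possibly many stars of size at least $3$, and the game is really a fight over how many components there are.

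Second, and more seriously, your proposed potential $(\text{edges})-\frac45(\text{non-isolated vertices})$ does not behave well move-by-move: completing a triangle from a $P_3$ changes it by $+1$, while creating an isolated $K_2$ changes it by $-\frac35$, so neither player can hold it near a constant without already knowing how to control the supply of $P_3$'s---which is exactly the hard part. The paper's key device is a different potential, the \emph{value}, which assigns $0$ to each triangle or isolated vertex, $\frac12$ to each $P_2$ or $P_3$, and $1$ to each larger star. The only move that decreases the value is closing a $P_3$ to a triangle. Min's strategy (make $K_2$'s; if Max makes $P_3$, extend it to $K_{1,3}$) guarantees no triangles form and that the value grows by at least $\frac12$ per Min move, giving value $\ge\frac{m-4}4$; but the final value is at most the number of components $n-m$, whence $m\le\frac{4n+4}5$. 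Max's strategy (turn Min's $K_2$ into $P_3$; otherwise enlarge a star with $\ge3$ edges or close a triangle) never increases the value, so value $\le\frac{m+2}4$; but at the end each component contributes at least as much to $n-m$ as to the value, giving value $\ge n-m-1$ and hence $m\ge\frac{4n-6}5$. Your plan is missing this potential and the specific strategies (in particular, Max does not ``attach $K_2$ to the big star''---that would create a $P_4$---but rather extends $K_2$ to $P_3$).
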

\begin{proof}
During the game, let the {\it value} of the current position count a
contribution for each component: $0$ for an isolated vertex or triangle,
$\FR12$ for $P_2$ or $P_3$, and $1$ for a larger star.  The only way to
decrease the value is to turn a copy of $P_3$ into a triangle.  When we speak of
``making'' or ``creating'' a subgraph, we mean producing it as a component of
$G$.

{\it Upper bound: Min strategy.}
While two isolated vertices are available, Min never makes $P_3$, and if Max
makes $P_3$, then Min responds by converting it to $K_{1,3}$.  Otherwise, Min
makes $P_2$, except that when exactly three isolated vertices remain Min
enlarges an existing star with at least two edges (if one exists).  If only one
isolated vertex remains, then Min attaches it to a largest existing star.

With this strategy, each move by Min increases the value by $\FR12$, except 
possibly the last when one isolated vertex remains, or the next-to-last when
exactly three isolated vertices remain.  This strategy ensures that no
triangles are created, unless Max stupidly makes isolated edges and the final
graph is $K_3+\FR{n-3}2P_2$ with $\FR{n+3}2$ edges.  Hence the components are
all stars, and the number of them is $n-m$, where $m$ is the final number of
edges.  Since the strategy also prevents Max from decreasing the value (unless
Max makes isolated edges), the value reaches at least $\FR{m-4}4$, where $m$ is
the final number of edges.  Also the final value is at most the number of
components.  We obtain $\FR{m-4}4\le n-m$, which simplifies to
$m\le \FR{4n+4}5$ (the same computation yields $m\le \FR{4n+3}5$ in the
Min-start game).

{\it Lower bound: Max strategy.}
While an isolated vertex is available, Max never makes $P_2$, except on the
first turn of the Max-start game.  If Min makes $P_2$, then Max turns it into
$P_3$.  If there is no isolated edge, then Max adds an edge to a star with at
least three edges or completes a triangle if no such star exists.

With this strategy, Max never increases the value, except on the first turn of
the Max-start game.  With each Min move increasing it by at most $\FR12$, the
upper bound on the value is $\FR{m+2}4$ (or $\FR{m+1}4$ in the Min-start game).
Also Max ensures that no isolated edge remains, except possibly the initial
move in the Max-start game and an edge joining the last two isolated vertices.
Except for those one or two components, the number of edges in a component is
its number of vertices minus its contribution to the value.  Hence the final
value is at least $n-m-\FR12$ in the Min-start game, or $n-m-1$ in the
Max-start game (an isolated edge contributes $\FR12$ to the value but $1$ to
$n-m$).  We obtain $\FR{m+2}4\ge n-m-1$ in the Max-start game and
$\FR{m+1}4\ge n-m-\FR12$ in the Min-start game, simplifying to $m\ge\FR{4n-6}5$
and $m\ge\FR{4n-3}5$, respectively.
\end{proof}

\section{The $P_4$-saturation game on $K_{m,n}$}\label{P4sec}

Now we study the $P_4$-saturation game on the complete bipartite graph
$K_{m,n}$.  Since $K_{m,n}$ contains no triangles, during the game all
components are stars.  Throughout this section, $X$ and $Y$ are the partite
sets of $K_{m,n}$, with $|X|=m\ge n=|Y|$.  Let an {\it $X$-star} or
{\it $Y$-star} be a star having at least two leaves in $X$ or in $Y$,
respectively.  Recall that $\alpha'(G)$ denotes the maximum size of a matching
in $G$.

\begin{lemma}\label{match}
A graph $G$ that is $P_4$-saturated relative to $K_{m,n}$ has at most
$m+n-\alpha'(G)$ edges.  If it contains both an $X$-star and a $Y$-star (or
an isolated edge), then equality holds.
\end{lemma}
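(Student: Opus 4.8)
The plan is to analyze the component structure of a $P_4$-saturated subgraph $G$ of $K_{m,n}$. Since $K_{m,n}$ is triangle-free and $G$ contains no $P_4$, every component of $G$ is a star (possibly a single vertex or a single edge). Write $c$ for the number of components of $G$; then $|E(G)| = (m+n) - c$, since a graph with $v$ vertices and $c$ components that is a disjoint union of stars (each a tree) has exactly $v - c$ edges. So the bound $|E(G)| \le m+n-\alpha'(G)$ is equivalent to $c \ge \alpha'(G)$, and equality holds exactly when $c = \alpha'(G)$.

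First I would establish $c \ge \alpha'(G)$: each component of $G$, being a star, contributes at most one edge to any matching (a star has matching number $1$ if it has an edge, $0$ if it is an isolated vertex), so $\alpha'(G) \le c$. This gives the inequality immediately; no saturation hypothesis is needed here. The content of the lemma is the equality case under the stated hypothesis. For that, I want to show: if $G$ is $P_4$-saturated relative to $K_{m,n}$ and contains both an $X$-star and a $Y$-star (or an isolated edge), then $c = \alpha'(G)$, i.e. $G$ has a matching saturating one vertex per component — equivalently, $G$ has at most one isolated vertex.

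The key step is to argue that saturation forces $G$ to have no two isolated vertices in the same part, and, using the $X$-star/$Y$-star (or isolated-edge) hypothesis, no isolated vertex at all unless forced. Suppose $G$ has an isolated vertex $u \in X$. Saturation means that adding any edge $uy$ with $y \in Y$ creates a $P_4$; since $u$ was isolated, the new $P_4$ must use $uy$ as a pendant edge, so $y$ lies in a component of $G$ that, together with $uy$, contains a $P_4$ — this forces every $y \in Y$ to be a leaf of an $X$-star (a star centered in $X$ with at least two leaves in $Y$... wait, more carefully: $uy$ plus a path of length two from $y$ gives $P_4$, so $y$ must have a neighbor $x'$ in $G$ with $\deg_G(x') \ge 2$, i.e. $y$ is a leaf of a star whose center $x' \in X$ has degree $\ge 2$, which is precisely an $X$-star). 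So if there is an isolated vertex in $X$, then $G$ has no isolated vertex in $Y$ and no $Y$-star and no isolated edge (an isolated edge's endpoint in $Y$ would have a degree-one neighbor), and symmetrically if there is an isolated vertex in $Y$ then $G$ has no $X$-star, no $Y$-isolated... I mean no $X$-isolated vertex, and no isolated edge. Hence under the hypothesis that $G$ has both an $X$-star and a $Y$-star (or an isolated edge), $G$ has no isolated vertex at all, so $\alpha'(G) = c$ and equality holds.

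The main obstacle I anticipate is getting the case analysis for the equality direction exactly right — in particular, carefully checking what the "or an isolated edge" alternative contributes and making sure the implications "isolated vertex in $X$ $\Rightarrow$ no $Y$-star and no isolated edge" (and its mirror) are airtight, including the degenerate small cases where a part has very few vertices. A secondary point to verify is the accounting $|E(G)| = m+n-c$, which is routine but should be stated since it is what converts the matching inequality into the edge bound. The inequality direction itself is essentially free, so essentially all the work is in the structural consequence of saturation, and I expect that to be a short but delicate argument.
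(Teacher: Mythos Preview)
Your proposal is correct and follows essentially the same approach as the paper: both arguments use that $G$ is a forest of stars, that $|E(G)|=(m+n)-c$ where $c$ is the number of components, and that $\alpha'(G)\le c$ since each star contributes at most one matching edge; for the equality case, both argue that the hypothesis forces $G$ to have no isolated vertices. You supply the details for this last step (which the paper simply asserts), but note that your usage of ``$X$-star'' and ``$Y$-star'' is flipped relative to the paper's convention (the paper defines an $X$-star as one with at least two \emph{leaves} in $X$, hence center in $Y$); since the hypothesis and conclusion are symmetric in the two types, your argument remains valid once the labels are reconciled.
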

\begin{proof}
Any even cycle contains $P_4$, so $G$ is a forest.  To avoid $P_4$, edges of a
matching must lie in distinct components.  Since $G$ is a forest, $E(G)$ is the
number of vertices minus the number of components, so $|E(G)|\le
m+n-\alpha'(G)$.

A saturated subgraph containing both an $X$-star and a $Y$-star (or an isolated
edge) cannot have isolated vertices.  The components are then nontrivial stars,
so there are $\alpha'(G)$ of them.  Hence there are exactly $m+n-\alpha'(G)$
edges.
\end{proof}

Call a $P_4$-saturated subgraph that contains both an $X$-star and a $Y$-star
a {\it full subgraph}.  A $P_4$-saturated subgraph that is not full has stars
of only one of these types (plus isolated edges, possibly) and thus has $m$ or
$n$ edges.  Hence Max wants to make a full subgraph.  When $\min\{m,n\}$ is
even, Min can prevent this in the Max-start game, and we obtain
$\Msat(P_4;K_{m,n})=\sat(P_4;K_{m,n})$ in that case.  When Max can make
a full subgraph, Lemma~\ref{match} encourages Min to create a large matching.

\begin{theorem}
For $m\ge n\ge1$, the 
$P_4$-saturation numbers of $K_{m,n}$ are given by
\[\sat_g(P_4;K_{m,n})=\begin{cases} n & \text{when $n$ is even},\\
	m & \text{when $n$ is odd and $m$ is even},\\
	m+\floor{\frac{n}{2}} & \text{when $mn$ is odd.} \end{cases} \]
 and
\[\sat'_g(P_4;K_{m,n})=\begin{cases} m & \text{when $n\le 2$ }, \\
m+\floor{\frac{n}{2}} & \text{when $n>2$ and $mn$ is even}, \\
m+\floor{\frac{n}{2}}-1 &\text{when $n>2$ and $mn$ is odd.}\end{cases}\]
\end{theorem}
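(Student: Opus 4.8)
The plan is to reduce everything to two structural facts and then run a case analysis on the parities of $m$ and $n$ and on who starts, exhibiting in each case a matched pair of strategies. First, as already noted, the graph $G$ built during the game is always a disjoint union of stars; by the discussion following Lemma~\ref{match}, a final position that is not full has exactly $m$ or $n$ edges, while a full one has $m+n-\alpha'(G)$ edges. Second, the number of nontrivial components of $G$ is nondecreasing: joining two nontrivial stars creates a $P_4$, so each move either enlarges a component (absorbing at most one previously isolated vertex) or uses two isolated vertices to create a new nontrivial component. Hence, for a game ending in a full position, the length is $m+n$ minus the number of these ``creation'' moves, and moreover $\alpha'(G)\le n-1$ there, since a full position contains an $X$-star. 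So Max wants a full final position with few creation moves, while Min wants either a non-full position (ideally the $n$-edge one) or a full position with many creation moves.

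For the upper bounds I would have Min use pairing strategies keyed to the parity of $n$. When $n$ is even and Min moves second, Min answers Max so as to preserve the invariant that $G$ is a union of $Y$-stars together with isolated vertices, with an even number of isolated vertices in $Y$: whenever Max makes an isolated edge, Min completes it to a $Y$-star, and otherwise Min merely enlarges some $Y$-star. One checks that no other moves are available to Max, that Min always has the vertex it needs, and that the isolated-$Y$ count drops by exactly two per round from an even start, so the game ends precisely when $Y$ is covered, giving $n$ edges; since every saturated subgraph has at least $\sat(P_4;K_{m,n})=n$ edges, this is the exact value. When $n$ is odd but $m$ is even, the mirror-image strategy on the $X$-side lets Min force the $m$-edge all-$X$-star outcome (and Max, by breaking the $Y$-pairing parity, can prevent the cheaper $n$-edge outcome, giving the matching lower bound). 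In the remaining cases — both $m,n$ odd, or the Min-start game with $n>2$ — Min cannot avoid a full position, so instead Min plays a creation move whenever possible; counting isolated vertices of $Y$ and analyzing the endgame yields the upper bounds $m+\FL{n/2}$ and $m+\FL{n/2}-1$.

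For the lower bounds, Max plays to force a full final position with few components. The basic tactic is for Max to spend each move enlarging a star on the bottleneck side $Y$, thereby consuming isolated vertices of $Y$ and denying Min further creation moves, while attacking any ``dangerous'' configuration — for instance a lone isolated edge that, for parity reasons, could let a player finish too cheaply — by converting it into whichever star type is still missing, so that fullness is eventually forced. Tracking the parity of the number of currently isolated vertices of $Y$ turn by turn then shows Max can keep the number of creation moves down to about $n/2$, and a careful count pins down the exact value ($\CL{n/2}$ in the Max-start game, $\CL{n/2}$ or $\CL{n/2}+1$ in the Min-start game). The small cases $n\le 2$, and a handful of exceptional small values where this bookkeeping has boundary effects, are checked directly.

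The step I expect to be the main obstacle is exactly this forced-full regime, where Max's denial strategy and Min's creation strategy must be made to meet: one has to control, turn by turn, how many isolated vertices remain on each side, and carefully analyze the endgame in which a single leftover isolated vertex — or a surviving isolated edge that the opponent can convert into the ``wrong'' kind of star — dictates the final move. This endgame parity is precisely what produces the $-1$ discrepancy between $\Msat(P_4;K_{m,n})$ and $\msat(P_4;K_{m,n})$ when $mn$ is odd: it records which player is compelled to make the last creation move. A secondary subtlety is that when Min makes a creation move, Min must avoid handing Max an isolated edge that Max can immediately turn into the missing star type, so Min must time and place these creations so that they can be protected.
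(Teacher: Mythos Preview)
Your plan is correct and follows essentially the same approach as the paper: Min's parity-pairing strategies to force non-full outcomes when the relevant part has even size, Min's ``make isolated edges'' strategy to inflate $\alpha'(G)$ otherwise, and Max's strategy of repeatedly enlarging a single star on the appropriate side to force fullness while keeping the component count near $\CL{n/2}$. Your framing in terms of ``creation moves'' is equivalent to the paper's direct tracking of $\alpha'(G)$, since in a full final position the number of nontrivial components equals $\alpha'(G)$; the endgame parity analysis you flag as the main obstacle is exactly where the paper's argument also does its case-by-case work.
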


\begin{proof}
We will consider cases based on who moves first and the parity of $m$ and $n$.
Let $G$ denote the $P_4$-saturated subgraph built during the game.  Again
``making'' a subgraph means producing it as a component of the current graph.

{\it Upper bounds.} We give strategies for Min.
If Max moves first and $m$ or $n$ is even, then Min ensures that only $X$-stars
or $Y$-stars are created, respectively, by immediately extending isolated edges
made by Max to such stars and otherwise enlarging such stars.  The final number
of edges is then $|X|$ or $|Y|$, respectively.

In the other cases, Min just ensures a large matching.  If Max moves first and
$mn$ is odd, or Min moves first and $mn$ is even, then Min makes isolated edges
until a matching of size $\CL{\FR n2}$ is built, later playing any legal move.
By Lemma~\ref{match}, at most $m+\FL{\FR n2}$ moves are played.

If Min moves first and $mn$ is odd, then Min can do slightly better.  If Max
responds to the first move by making an $X$-star or $Y$-star, then the parity
allows Min to ensure that only $X$-stars or $Y$-stars, respectively, will be
played, yielding an outcome of $|X|$ or $|Y|$.  Hence Max must immediately make
another isolated edge.  The moves by Min still yield a matching of size
$\CL{\FR n2}$, and with the extra edge made by Max the bound improves by $1$.

{\it Lower bounds.}  We give strategies for Max.  Since the game cannot leave
an isolated vertex in each part, at least $\min\{m,n\}$ moves are played.  If
an $X$-star is made, then no isolated vertex can be left in $X$, and at least
$m$ moves are made.  In the Max-start game with $n$ odd and $m$ even, Min can
prevent an $X$-star only by leaving only $Y$-stars after each move.  After
$n-1$ moves, Max makes $K_2$ using the last isolated vertex of $Y$, and then
Min is forced to make an $X$-star.  In the Min-start game with $n\le2$, Max
makes an $X$-star immediately.

In the other cases, we may assume $n\ge3$.  Max wants to force a full subgraph
and keep $\alpha'(G)$ small.  In the Min-start game with $mn$ even, Max
responds to the first move by making a $Y$-star if $n$ is even or an
$X$-star if $n$ is odd and $m$ is even.  In the Max-start game with $mn$ odd,
Max makes a $Y$-star on the third move if Min made $K_2$ on the second;
otherwise Max adds to the $X$-star or $Y$-made by Min.

In each of these cases, Max continues enlarging the original $X$-star or
$Y$-star.  If the graph has not become full by the time $X$ or $Y$,
respectively, has only one isolated vertex remaining, then every move has
created a leaf in that part.  By the parity of the size of that part, it is
Max's turn.  Max makes $K_2$, and now Min must make the graph full.

Hence the graph becomes full, so Max takes advantage of Lemma~\ref{match} by
making the initial star large.  Max can play at least $\FL{n/2}$ edges in
the initial star.  Max can play one more such edge on the $n$th move unless Min
has also played edges into stars in the same direction.  Hence
$\alpha'(G)\le n-\CL{n/2}+1$.  By Lemma~\ref{match}, the final number of edges
is $m+n-\alpha'(G)$, which is at least $m+\FL{n/2}$.

For the Min-start game with $mn$ odd, Max cannot do quite as well.  As noted
when discussing upper bounds, if Max makes an $X$-star or $Y$-star on move $2$,
then Min can limit the final number of edges to $m$ or $n$, respectively.
Hence Max makes $K_2$ on move $2$.  If Min makes $K_{1,2}$, then Max makes
the other type of star.  If Min makes $K_2$, then Max makes an $X$-star and
can make a $Y$-star on the next round.

Hence the graph becomes full.  Max subsequently enlarges $Y$-stars until $Y$
has no more isolated vertices.  All moves by Max to that point except the first
two enlarge $Y$-stars, and there is also one such edge among the first four
moves (played by Max or Min).  Letting a maximum matching consist of the first
edge from each component, we thus have
$\alpha'(G)\le n-1-\FL{\FR{n-4}2}=\CL{\FR n2}+1$, so the final number of edges
is at least $m+\FL{\FR n2}-1$.
\end{proof}

\section{The $C_4$-saturation game on $K_{n,n}$}\label{C4sec}

In this section, we study the $C_4$-saturation game on $K_{n,n}$, the natural
bipartite analogue of the F\"uredi-Reimer-Seress problem.  As we have noted,
the trivial lower bound and the result of~\cite{Fur} yield
$\Omega(n)\le \Msat(C_4,K_{n,n}) \le O(n^{3/2})$.  

Our main result is a polynomial improvement of the lower bound:
$\Msat(C_4,K_{n,n}) = \Omega(n^{13/12})$.  We first prove a technical lemma
giving a lower bound on the size of a restricted type of graph that is also
$C_4$-saturated relative to $K_{n,n}$.  Here our interest is the exponent on
$n$; we make no attempt to optimize lower-order terms or the leading
coefficient.

\begin{lemma}\label{C4_sat}
Let $G$ be $C_4$-saturated relative to $K_{n,n}$, and let $c$ and $d$ be
positive constants.  If there exists $S \subseteq V(G)$ with at least
$cn$ vertices in each partite set such that $\size{N(v)\cap S}\le d\sqrt n$ for
all $v\in V(G)$, then $\size{E(G)} \ge an^{13/12}-O(n^{35/36})$, where
$a=\min\{\FR12(\FR{c^2}{2d^2})^{2/3},\FR{c^2}{2d}\}$.
\end{lemma}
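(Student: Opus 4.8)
The plan is to combine three ingredients: the saturation hypothesis, which forces every missing edge to have its endpoints at distance exactly $3$; the $C_4$-free hypothesis, which makes any two vertices in the same part have at most one common neighbor and hence makes high-degree vertices scarce; and the ``spread'' hypothesis, which limits how much of $S$ any one neighborhood can contain. First I would record that $G$ is connected, and restate saturation: for $x\in V(G)$ and $y$ in the opposite part with $xy\notin E(G)$, adding $xy$ creates a $C_4$ if and only if some edge of $G$ joins $N(x)$ to $N(y)$, i.e.\ if and only if $\mathrm{dist}_G(x,y)=3$; so every vertex sees every vertex of the opposite part at distance $1$ or $3$. Writing $N_2(x)$ for the set of vertices at distance exactly $2$ from $x$, the $C_4$-free condition makes $N_2(x)$ the disjoint union of the sets $N(w)\setminus\{x\}$ over $w\in N(x)$, so $|N_2(x)|=\sum_{w\in N(x)}(d(w)-1)$. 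For $v\in S$, the at least $cn-d\sqrt n$ vertices of $S$ in the opposite part that miss $v$ all lie in $N_3(v)$, each is adjacent to a vertex of $N_2(v)$, and each vertex of $N_2(v)$ has at most $d\sqrt n$ neighbors in $S$; hence $|N_2(v)|\ge (cn-d\sqrt n)/(d\sqrt n)\ge \frac{c}{d}\sqrt n-1$ for every $v\in S$.

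Next I would bring in scarcity of high-degree vertices. Counting paths of length $2$ gives $\sum_{x\in V(G)}\binom{d(x)}{2}\le n(n-1)$, so for any threshold $K$ there are $O(n^2/K^2)$ vertices of degree at least $K$ and their total degree is $O(n^2/K)$; moreover the same disjointness argument shows a vertex $w$ satisfies $\sum_{z\in N(w)}d(z)\le n+d(w)$, so high-degree vertices are surrounded by low-degree vertices. Assuming $|E(G)|<n^{3/2}$ (otherwise we are done), I would then split the vertices $v\in S$ in one part by whether $v$ has a neighbor of degree at least $K$. For those with no such neighbor, the inequality $\sum_{w\in N(v)}(d(w)-1)=|N_2(v)|\ge\frac{c}{d}\sqrt n-1$ together with $d(w)<K$ forces $d(v)$ to be of order at least $\sqrt n/K$, and summing these degrees lower-bounds $|E(G)|$. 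For those with such a neighbor, I would charge each to a high-degree vertex and use the degree-sum bounds from the path count (and the ``$n+d(w)$'' inequality) to cap the total contribution the scarce high-degree vertices can make. The two analyses give competing lower bounds on $|E(G)|$ — one decreasing in $K$, one increasing — and, together with the crude bound $|E(G)|\ge\Delta(G)$ handling genuinely large degrees, choosing $K$ to balance them yields $|E(G)|=\Omega(n^{13/12})$; the two terms inside the $\min$ in the definition of $a$ correspond to which competing bound is binding, and the $O(n^{35/36})$ error comes from the lower-order terms in this optimization.

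The step I expect to be the real obstacle is making the double count in the previous paragraph actually beat the trivial bound $|E(G)|\ge 2n-1$. Every \emph{naive} version fails: the set of vertices of $S$ having a high-degree neighbor can be essentially all of $S$, so one cannot simply discard them, and a single high-degree vertex can lie in the neighborhood of linearly many vertices of $S$. The super-linear improvement appears only when one exploits the genuine structural consequences of $C_4$-freeness — the scarcity of high-degree vertices and the ``$\sum_{z\in N(w)}d(z)\le n+d(w)$'' bound — to show that these vertices do not have enough incident edges to cover the $\Omega(\sqrt n)$-size second neighborhoods demanded of every vertex of $S$, and then one must tune the threshold $K$ carefully enough to reach the exponent $13/12$ rather than a weaker value. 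Getting this balance and bookkeeping right is where essentially all of the work lies.
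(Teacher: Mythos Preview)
Your opening is correct: the bound $|N_2(v)|\ge\frac{c}{d}\sqrt n-1$ for $v\in S$ follows exactly as you say, and the path count $\sum_x\binom{d(x)}{2}\le 2\binom{n}{2}$ is the right scarcity tool.  But the argument stops precisely where you say the obstacle lies, and your own sketch shows it does not close.  Splitting $S$ by whether $v$ has a neighbor of degree $\ge K$, the ``bad'' set has size at most $(\text{high-degree vertices})\cdot d\sqrt n\le \frac{2d\,n^{5/2}}{K^2}$, which is $o(n)$ only once $K\gg n^{3/4}$; but then each ``good'' $v$ is only guaranteed degree $\ge\frac{c}{d}\sqrt n/K\ll 1$, so summing degrees gives nothing beyond the trivial linear bound.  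The inequality $\sum_{z\in N(w)}d(z)\le n+d(w)$ does not rescue this: it caps how many high-degree neighbors a single $w$ can have, but plugging it into the same double count still yields only $|E(G)|=\Omega(n)$.  So there is a genuine gap: you have identified the right tension but not a mechanism that produces an exponent above $1$.

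The paper's argument is organized differently and sidesteps this.  Rather than passing to $|N_2(v)|$, it keeps the raw count of at least $c^2n^2-O(n^{3/2})$ copies of $P_4$ with both endpoints in $S$ and assigns each such path to its \emph{central} edge; the hypothesis gives multiplicity at most $d^2n$ per edge, but at most $d\,n^{11/12}$ when one endpoint has degree below $n^{5/12}$.  The step you are missing is how to handle edges with \emph{both} endpoints of degree $\ge n^{5/12}$: these induce a $C_4$-free subgraph $H$ on the high-degree set $T$, and the paper applies F\"uredi's extremal bound $|E(H)|\le |T|^{3/2}+O(|T|^{4/3})$ to $H$ itself, not merely the path count in $G$.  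Either $|T|\ge bn^{2/3}$ with $b=(c^2/2d^2)^{2/3}$, giving $|E(G)|\ge\frac{b}{2}n^{13/12}$ directly from degrees in $T$, or $|E(H)|\le\frac{c^2}{2d^2}n+O(n^{8/9})$, so at most half the essential paths use high-high central edges and the remaining $\frac{c^2}{2}n^2$ paths force at least $\frac{c^2}{2d}n^{13/12}$ low-degree-incident edges.  These two cases are the two terms in $a$, the threshold $n^{5/12}$ is fixed rather than optimized, and the $O(n^{35/36})$ error is the $O(|T|^{4/3})$ term propagated through.
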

\begin{proof}
Let $S_X$ and $S_Y$ be the subsets of $S$ in the two partite sets.  Consider
$x\in S_X$ and $y\in S_Y$ such that $xy\notin E(G)$.  Since $G$ is
$C_4$-saturated relative to $K_{n,n}$, it contains a copy of $P_4$ with
endpoints $x$ and $y$.  Each vertex in $S_X$ has at most $d\sqrt{n}$ neighbors
and hence at least $cn-d\sqrt{n}$ nonneighbors in $S_Y$.  Thus $G$ contains at
least $c^2n^2-cdn^{3/2}$ copies of $P_4$ with endpoints in $S_X$ and $S_Y$;
call such paths {\it essential paths}.  Since each essential path has endpoints
in $S_X$ and $S_Y$, and since no vertex has more than $d\sqrt{n}$ neighbors in
$S$, no edge is the central edge of more than $d^2n$ essential paths.

Let $T$ be the set of vertices of $G$ with degree at least $n^{5/12}$, and let
$b=(\frac{c^2}{2d^2})^{2/3}$.  If $\size{T}\ge bn^{2/3}$, then
$\sum_{v\in T}d(v)\ge bn^{13/12}$, which yields
$\size{E(G)}\ge\frac b2 n^{13/12}$.
Otherwise, let $H$ be the subgraph of $G$ induced by $T$.  Since $H$ is
$C_4$-free, a result of F\"uredi~\cite{Fur} yields
$\size{E(H)} \le (bn^{2/3})^{3/2} + O((bn^{2/3})^{4/3})$, which
simplifies to $\size{E(H)} \le \frac{c^2}{2d^2} n + O(n^{8/9})$.  Multiplying
by $d^2 n$, we conclude that at most $\frac{c^2}2 n^2 + O(n^{17/9})$ essential
paths have central edges in $H$.

Thus at least $\frac{c^2}{2} n^2 - O(n^{17/9})$ essential paths have central
edges incident to a vertex with degree less than $n^{5/12}$.  Each such edge
is the central edge of at most $d n^{11/12}$ essential paths; hence $G$
has at least $\frac{c^2}{2d} n^{13/12} - O(n^{35/36})$ such edges.
\end{proof}

Though the hypotheses of Lemma~\ref{C4_sat} seem technical, they apply whenever
$\Delta(G) \le d\sqrt{n}$.  Hence we obtain a corollary for ordinary saturation
(using $c=1$).

\begin{corollary}\label{cor_C4_sat}
If $G$ is $C_4$-saturated relative to $K_{n,n}$ and $\Delta(G) \le d\sqrt{n}$,
then $\size{E(G)} \ge an^{13/12}-O(n^{35/36})$, where
$a=\min\{\FR12(\FR1{2d^2})^{2/3},\FR1{2d}\}$.
(If $d\le\FR14$, then $a=\FR1{2d}$).
\end{corollary}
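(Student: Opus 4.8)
The plan is to deduce the corollary directly from Lemma~\ref{C4_sat} by taking the witness set $S$ to be all of $V(G)$. First I would note that with $S=V(G)$ each partite set contributes exactly $n$ vertices to $S$, so the hypothesis that $S$ meets each partite set in at least $cn$ vertices holds with $c=1$. Second, for every $v\in V(G)$ we have $\size{N(v)\cap S}=\size{N(v)}=d(v)\le\Delta(G)\le d\sqrt n$, which is precisely the degree condition in the lemma (with the corollary's constant $d$ playing the role of the lemma's $d$). Thus the hypotheses of Lemma~\ref{C4_sat} are satisfied with this choice of $c$ and $d$, and the lemma immediately gives $\size{E(G)}\ge an^{13/12}-O(n^{35/36})$ with $a=\min\{\FR12(\FR1{2d^2})^{2/3},\FR1{2d}\}$, which is the claimed bound.

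It then remains to check the parenthetical simplification: when $d\le\FR14$, the minimum defining $a$ is attained by the term $\FR1{2d}$. For this I would compare the two candidates directly. The inequality $\FR1{2d}\le\FR12(\FR1{2d^2})^{2/3}$ is equivalent, after clearing the common factor $\FR12$ and raising both (positive) sides to the power $3/2$, to $d^{-3/2}\le(2d^2)^{-1}$; multiplying through by $d^2$ this reduces to $\sqrt d\le\FR12$, i.e. to $d\le\FR14$. Hence $a=\FR1{2d}$ whenever $d\le\FR14$.

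I expect no real obstacle here. The only points requiring care are bookkeeping ones: identifying the constant $d$ of the corollary with the constant $d$ of the lemma, specializing $c=1$, and carrying out the elementary inequality for the parenthetical correctly. No combinatorial argument beyond Lemma~\ref{C4_sat} is needed.
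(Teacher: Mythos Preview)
Your proposal is correct and matches the paper's approach exactly: the paper simply remarks that the hypotheses of Lemma~\ref{C4_sat} hold with $S=V(G)$ and $c=1$ whenever $\Delta(G)\le d\sqrt{n}$, and does not even spell out the parenthetical computation. Your verification of the inequality $\FR1{2d}\le\FR12(\FR1{2d^2})^{2/3}\iff d\le\FR14$ is correct and more detailed than what the paper provides.
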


Our main result for the $C_4$-saturation game on $K_{n,n}$ follows easily from
Lemma~\ref{C4_sat}.

\begin{theorem}\label{c4sat_lower}
$\Msat(C_4,K_{n,n}) \ge \FR 1{10.4}n^{13/12}-O(n^{35/36})$,
and similarly for $\msat(C_4;K_{n,n})$.
\end{theorem}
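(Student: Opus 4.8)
The plan is to reduce Theorem~\ref{c4sat_lower} to Lemma~\ref{C4_sat} by exhibiting, for any graph $G$ produced by the $C_4$-saturation game on $K_{n,n}$, a suitable set $S$ satisfying the degree condition $\size{N(v)\cap S}\le d\sqrt n$ for an absolute constant $d$. The natural choice is to let $S$ consist of all vertices whose degree in the final graph $G$ is at most $d\sqrt n$, for $d$ to be chosen; then the degree bound holds automatically for the $S$-restricted neighborhoods. The work is to show that $S$ contains at least $cn$ vertices in each partite set, i.e., that only few vertices can acquire large degree. Here is where the game structure enters: whenever a vertex $v$ reaches degree roughly $\sqrt n$, the neighborhood $N(v)$ has $\binom{|N(v)|}{2}=\Omega(n)$ pairs, and because $G$ is $C_4$-free, no two vertices on the opposite side both lie in $N(v)$ and share another common neighbor; more to the point, a high-degree vertex ``uses up'' a positive fraction of the vertices on the other side as its private neighborhood. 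If too many vertices on side $X$ had degree exceeding $d\sqrt n$, a counting argument (sum of degrees versus the $C_4$-freeness bound, or a direct pair-counting argument on $Y$) would force $|E(G)|$ to already exceed $an^{13/12}$, so in that case we are done directly; otherwise $S$ is large enough and Lemma~\ref{C4_sat} applies.

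Concretely, the dichotomy I would run is: fix $d$ (something like $d=2$ or whatever makes the constant $\tfrac1{10.4}$ come out) and $c=\tfrac12$. Let $T_X$ be the vertices of $X$ with degree $>d\sqrt n$ and similarly $T_Y$. If $|T_X|\ge cn$ or $|T_Y|\ge cn$, then $\sum_v d(v)\ge cn\cdot d\sqrt n$, giving $|E(G)|\ge \tfrac{cd}{2}n^{3/2}$, which dominates $an^{13/12}$ for large $n$; so the bound holds trivially. Otherwise $|T_X|<cn$ and $|T_Y|<cn$, so $S=V(G)\setminus(T_X\cup T_Y)$ has more than $(1-c)n$ vertices in each part; with $c=\tfrac12$ that is at least $\tfrac12 n$ in each part, and by construction $\size{N(v)\cap S}\le d\sqrt n$ for every $v$. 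Apply Lemma~\ref{C4_sat} with these $c$ and $d$: $|E(G)|\ge an^{13/12}-O(n^{35/36})$, where $a=\min\{\tfrac12(\tfrac{c^2}{2d^2})^{2/3},\tfrac{c^2}{2d}\}$. Plugging in $c=\tfrac12$ and the chosen $d$ and checking that $a\ge\tfrac1{10.4}$ finishes the Max-start case.

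The key observation that makes this work for the \emph{game} rather than just for saturation is that we never needed to control who played which edge: Lemma~\ref{C4_sat} and the trivial high-degree case both depend only on the final graph being $C_4$-saturated relative to $K_{n,n}$, which holds regardless of the players' strategies. In particular the same argument applies verbatim when Min moves first, since the terminal position of that game is also $C_4$-saturated relative to $K_{n,n}$; this gives the claimed bound for $\msat(C_4;K_{n,n})$ as well.

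The main obstacle I anticipate is purely bookkeeping: choosing $d$ and $c$ so that both branches of the dichotomy clear the target constant $\tfrac1{10.4}$, and making sure the ``trivial'' branch's $n^{3/2}$ term genuinely dominates $n^{13/12}$ with the right implied constants for all sufficiently large $n$ (small $n$ being absorbed into the $O(n^{35/36})$ error term). There is a mild tension because increasing $d$ weakens Lemma~\ref{C4_sat}'s constant $a$ but makes $S$ larger, i.e.\ lets us take $c$ closer to $1$; one optimizes this one-variable trade-off to land on $\tfrac1{10.4}$. None of this is conceptually hard, but it is the only place any real computation is needed, and it is where the slightly unusual exponents $13/12$ and $35/36$ have to be tracked carefully.
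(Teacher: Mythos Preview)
Your argument has a genuine gap at the first step. You set $S$ to be the set of vertices of degree at most $d\sqrt n$ and assert that ``the degree bound holds automatically for the $S$-restricted neighborhoods.'' But Lemma~\ref{C4_sat} requires $\size{N(v)\cap S}\le d\sqrt n$ for \emph{every} $v\in V(G)$, not just for $v\in S$. A high-degree vertex can have all of its neighbors in $S$. Concretely, the ``double star'' --- $x_1$ adjacent to all of $Y$ and $y_1$ adjacent to all of $X$ --- is $C_4$-saturated relative to $K_{n,n}$ with only $2n-1$ edges; here $S=V(G)\setminus\{x_1,y_1\}$, yet $\size{N(x_1)\cap S}=n-1$, so the hypothesis of Lemma~\ref{C4_sat} fails badly. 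This example also lands in your ``non-trivial'' branch (only two high-degree vertices), so the dichotomy does not rescue it.

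More broadly, your final paragraph claims the argument ``depends only on the final graph being $C_4$-saturated relative to $K_{n,n}$.'' If that were true you would have proved $\sat(C_4;K_{n,n})=\Omega(n^{13/12})$, which the double star refutes. The game structure is not optional here. In the paper's proof, Max spends roughly the first $2n/3$ moves building $2k$ vertex-disjoint stars of size $k\approx\sqrt{n/3}$ (each new edge joins an isolated vertex to a designated center, so it can never complete a $C_4$, and Min cannot exhaust the isolated vertices in time). Taking $S$ to be the set of \emph{leaves} of these stars gives about $n/3$ vertices in each part, and --- crucially --- any vertex $v$ can be adjacent to at most one leaf of each star, since two such adjacencies would form a $C_4$ through the star's center. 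Hence $\size{N(v)\cap S}\le 2k\le\sqrt{n/3}$ for \emph{all} $v$, and Lemma~\ref{C4_sat} applies with $c$ just under $1/3$ and $d=\sqrt{1/3}$, which is what produces the constant $1/10.4$.
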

\begin{proof}
We provide a strategy for Max that forces the final subgraph of $K_{n,n}$ to
satisfy the hypotheses of Lemma~\ref{C4_sat}.  This strategy governs almost
the first $2n/3$ moves for Max, after which Max plays arbitrarily.

Let $k=\FL{\sqrt{n/3}}-1$.  Max arranges to give degree $k$ to $k$ specified
vertices in each partite set.  Each move by Max makes an isolated vertex
adjacent to a vertex with growing degree; hence it cannot complete a $4$-cycle.
Fewer than $n/3$ vertices are needed by Max in each part, so Min cannot 
exhaust the isolated vertices in either part with fewer than $2n/3$ moves.
After this phase, Max may play any legal move.

In the final subgraph $G$, let $S$ be the set of leaves of the $2k$ specified
stars constructed by Max.  By construction, the stars are disjoint, so $S$
has about $n/3-2\sqrt{n/3}$ vertices in each part.  Moreover, no vertex in $G$
has more than $\sqrt{n/3}$ neighbors in $S$, since each vertex other than the
center of a star is adjacent to at most one leaf of the star.

Thus $G$ satisfies the hypotheses of Lemma~\ref{C4_sat} with $c$ being any
constant less than $1/3$ and $d=\sqrt{1/3}$, from which the claim follows.
\end{proof}

While Theorem~\ref{c4sat_lower} does establish a nontrivial asymptotic lower
bound for $\Msat(C_4;K_{n,n})$, the correct order of growth remains
undetermined.  Lemma~\ref{match} suggests the following question, which would
yield improved lower bounds for $\Msat(C_4;K_{n,n})$: What is the minimum
number of edges in a graph with maximum degree $D$ that is $C_4$-saturated
relative to $K_{n,n}$? 

%
%
%

{\small

}


\begin{thebibliography}{10}
\frenchspacing

\bibitem{AJSS}
V.~Anuradha, C. Jain, J. Snoeyink, and T. Szab{\'o},
\emph{How long can a graph be kept planar?},
Electron. J. Combin. \textbf{15} (2008), no.~1, Note 14, 7.

\bibitem{BM}
J. Balogh and R. Martin,
\emph{On avoider-enforcer games},
SIAM J.  Discrete Math. \textbf{23} (2009), no.~2, 901--908.

\bibitem{BS2}
J. Bar{\'a}t and M. Stojakovi{\'c},
\emph{On winning fast in avoider-enforcer games},
Electron. J. Combin. \textbf{17} (2010), no.~1, Research Paper 56, 12 pages.




\bibitem{BB}
P.~Bennett and T.~Bohman,
\emph{A note on the random greedy independent set algorithm}.
arXiv:1308.3732.



\bibitem{BK}
T.~Bohman and P.~Keevash, 
\emph{The early evolution of the $H$-free process},
Invent. Math. \textbf{181} (2010), no. 2, 291-336. 

\bibitem{BR}
B.~Bollob\'as and O.~Riordan,
\emph{Constrained graph processes},
Electron. J. Comb. \textbf{7}, R18 (2000).

\bibitem{CKOW}
D.~W.~Cranston, W.~B.~Kinnersley, S.~O, and D.~B.~West,
\emph{Game matching number of graphs},
Discrete Applied Math. \textbf{161} (2013), 1828--1836.

\bibitem{EHM}
P.~Erd{\H{o}}s, A.~Hajnal, and J.~W. Moon,
\emph{A problem in graph theory},
Amer. Math. Monthly \textbf{71} (1964), 1107--1110.

\bibitem{EKR}
P.~Erd{\H{o}}s, C.~Ko, and R.~Rado,
\emph{Intersection theorems for systems of finite sets},
Quart. J. Math. Oxford Ser. (2) \textbf{12} (1961), 313--320.

\bibitem{ESW}
P.~Erd\H{o}s, S.~Suen, and P.~Winkler,
\emph{On the size of a random maximal graph},
Random Struct. Algorithms \textbf{6}, (1995), 309--318.

\bibitem{FFGJ}
J.~Faudree, R.~J.~Faudree, R.~J.~Gould, and M.~S.~Jacobson,
\emph{Saturation numbers for trees},
Electron. J. Combin. \textbf{16} (2009), Research Paper 91, 19 pages.

\bibitem{FK}
O.~N. Feldheim and M. Krivelevich,
\emph{Winning fast in sparse graph construction games},
Combin. Probab. Comput. \textbf{17} (2008), no.~6, 781--791.

\bibitem{FJH}
M. Ferrara, M.~S.~Jacobson, and A.~Harris,
\emph{The game of $\cF$-saturator},
Discrete Appl. Math. \textbf{158} (2010), no.~3, 189--197.

\bibitem{FKPS}
A. Frieze, M. Krivelevich, O. Pikhurko, and T. Szab{\'o},
\emph{The game of {J}umble{G}},
Combin. Probab. Comput. \textbf{14} (2005), no.~5-6, 783--793.

\bibitem{Fu}
Z.~F{\"u}redi,
\emph{On maximal intersecting families of finite sets},
J. Combin. Theory Ser. A \textbf{28} (1980), no.~3, 282--289.

\bibitem{Fur}
Z. F{\"u}redi,
\emph{New asymptotics for bipartite {T}ur\'an numbers},
J. Combin. Theory Ser. A \textbf{75} (1996), no.~1, 141--144.

\bibitem{FRS}
Z. F{\"u}redi, D. Reimer, and {\'A}. Seress,
\emph{Hajnal's triangle-free game and extremal graph problems},
Proceedings of the {T}wenty-second {S}outheastern {C}onference on
{C}ombinatorics, {G}raph {T}heory, and {C}omputing ({B}aton {R}ouge, {LA},
1991), vol.~82, 1991, pp.~123--128.

\bibitem{HKNS}
D. Hefetz, M. Krivelevich, A. Naor, and M. Stojakovi{\'c},
On saturation games, arXiv:1406.2111.

\bibitem{HKSS1}
D. Hefetz, M. Krivelevich, M. Stojakovi{\'c}, and T. Szab{\'o},
\emph{Planarity, colorability, and minor games},
SIAM J. Discrete Math. \textbf{22} (2008), no.~1, 194--212.

\bibitem{HKSS3}
D. Hefetz, M. Krivelevich, M. Stojakovi{\'c}, and T. Szab{\'o},
\emph{Fast winning strategies in avoider-enforcer games},
Graphs Combin. \textbf{25} (2009), no.~4, 533--544.

\bibitem{HKSS2}
D. Hefetz, M. Krivelevich, M. Stojakovi{\'c}, and T. Szab{\'o},
\emph{Fast winning strategies in {M}aker-{B}reaker games},
J. Combin. Theory Ser. B \textbf{99} (2009), no.~1, 39--47.

\bibitem{HKS}
D. Hefetz, M. Krivelevich, and T. Szab{\'o},
\emph{Avoider-enforcer games},
J. Combin. Theory Ser. A \textbf{114} (2007), no.~5, 840--853.  

\bibitem{KT}
L.~K\'aszonyi and Zs. Tuza,
\emph{Saturated graphs with minimal number of edges},
J. Graph Theory \textbf{10} (1986), no.~2, 203--210.

\bibitem{LR}
J.~D.~Lee and A.-E.~Riet, $\cF$-Saturation Games,
http://arxiv.org/abs/1406.1500




\bibitem{OT}
D.~Osthus and A. Taraz,
\emph{Maximal $H$-free graphs},
Random Struct.  Algorithms \textbf{18} (2001), 61--82.

\bibitem{PV}
B. Patk\'os and M. Vizer,
\emph{Game saturation of intersecting families},
Cent. Eur. J. Math. \textbf{12} (2014), no.~9, 1382--1389.





\bibitem{Tur}
P. Tur{\'a}n, \emph{Eine {E}xtremalaufgabe aus der {G}raphentheorie}, Mat.
  Fiz. Lapok \textbf{48} (1941), 436--452.

\end{thebibliography}
\end{document}